\begin{document}
\title*{A Linearly-Growing Conversion from the Set Splitting Problem to the Directed Hamiltonian Cycle Problem}
\titlerunning{A Conversion from the Set Splitting Problem to Directed Hamiltonian Cycle Problem}
% Use \titlerunning{Short Title} for an abbreviated version of
% your contribution title if the original one is too long
\author{Michael Haythorpe and Jerzy A. Filar}
% Use \authorrunning{Short Title} for an abbreviated version of
% your contribution title if the original one is too long
\institute{Michael Haythorpe \at Flinders University, Australia, \email{michael.haythorpe@flinders.edu.au}
\and Jerzy A. Filar \at Flinders University, Australia \email{jerzy.filar@flinders.edu.au}}
%
% Use the package "url.sty" to avoid
% problems with special characters
% used in your e-mail or web address
%
\maketitle

\abstract{We consider a direct conversion of the, classical, set splitting problem to the directed Hamiltonian cycle problem. A constructive procedure for such a conversion is given, and it is shown that the input size of the converted instance is a linear function of the input size of the original instance. A proof that the two instances are equivalent is given, and a procedure for identifying a solution to the original instance from a solution of the converted instance is also provided. We conclude with two examples of set splitting problem instances, one with solutions and one without, and display the corresponding instances of the directed Hamiltonian cycle problem, along with a solution in the first example.}

%\title{A Linearly-Growing Conversion from the Set Splitting Problem to the Directed Hamiltonian Cycle Problem}
%\author{M. Haythorpe and J.A. Filar}
%\institute{J.A. Filar
%\at Flinders University\\
%\email{jerzy.filar@flinders.edu.au}
%\and
%M. Haythorpe
%\at Flinders University\\
%\email{michael.haythorpe@flinders.edu.au}
%}
%\maketitle
%\begin{abstract}We consider a direct conversion of the set splitting problem to directed Hamiltonian cycle problem. A constructive procedure for such a conversion is given, and it is shown that the input size of the converted instance is a linear function of the input size of the original instance. A proof that the two instances are equivalent is given, and a procedure for identifying a solution to the original instance from a solution of the converted instance is also provided. We conclude with two examples of set splitting problem instances, one with solutions and one without, and display the corresponding instances of directed Hamiltonian cycle problem, along with a solution in the first example.
%\end{abstract}
%
%\keywords{Hamiltonian cycle problem $\cdot$ Set Splitting $\cdot$ Conversion $\cdot$ NP-Complete $\cdot$ Linear}

\section{Introduction}\label{s:Intro}

The set splitting problem (SSP) is a famous decision problem that can be simply stated: given a finite universe set $\mathcal{U}$, and a family $\mathcal{S}$ of subsets of $\mathcal{U}$, decide whether there exists a partition of $\mathcal{U}$ into two, disjoint, non-empty subsets $\mathcal{U}_1$ and $\mathcal{U}_2$ such that every subset $S^i \in \mathcal{S}$ is {\em split} by this partition. That is, for each subset $S^i \in \mathcal{S}$, we have $S^i \not\subset \mathcal{U}_1$ and $S^i \not\subset \mathcal{U}_2$. If such a partition exists, we call it a {\em solution} of the SSP instance, and say that the decision of the instance is YES. Similarly, if no such partition exists, then the decision of the instance is NO.

This problem has been studied by such distinguished mathematicians as P. Erd\H{o}s \cite{erdos} and E.W. Miller \cite{miller} since the 1930s. Since then, it has been studied by many authors in the mathematics, computer science, and engineering communities. It has acquired a theoretical interest by virtue of its relationship to hypergraph colourability problems (e.g. see Radhakrishnan and Srinivasan \cite{radhakrishnan}). In addition, it has applicability in modern lines of research such as DNA computing (e.g. see Chang et al \cite{chang}), and several recent algorithms for solving SSP have been developed (e.g. see Dehne et al \cite{dehne}, Chen and Lu \cite{chen}, Lokshtanov and Saurabh \cite{lokshtanov}).

SSP is known to be NP-complete \cite{gareyjohnson}. One key feature of NP-complete problems is that an instance of any one NP-complete problem can be converted to an instance of any other NP-complete problem, in such a way that the two instances have the same answer, and the cardinalities of the variables sets in the second instance are polynomial functions of the size of input data for the original instance. The study of NP-complete problems originated with Cook \cite{cook}, who proved that an instance of any problem in the set of NP decision problems can be converted to an equivalent instance of the boolean satisfiability problem (SAT). Therefore, SAT was the first problem proven to be NP-complete. Then, if any NP-complete problem $P_1$ can be {\em converted} to another problem $P_2$, the second problem $P_2$ is also proved to be NP-complete. This is because any instance of $P_2$ can be converted to an instance of SAT (via Cook's theorem), and from there converted (possibly through multiple other problems) to an instance of $P_1$.

Cook's breakthrough approach provided the template for NP-complete conversions, and subsequently it has become commonplace for problems to be converted to SAT. A recent study of this may be seen in Kugele \cite{kugele}. However, there is nothing inherently special about SAT to set it apart from other fundamental NP-complete problems. Motivated by this line of thinking, in this chapter we investigate the conversion of SSP to another fundamental NP-complete problem, namely the directed Hamiltonian cycle problem (HCP). Directed HCP can be described simply: given a graph $\Gamma$ containing a set of vertices $V$, such that $|V| = N$, and a set of directed edges $E$, decide whether there exists a simple cycle of length $N$ in the graph $\Gamma$, or not. Directed HCP was one of the earliest known NP-complete problems \cite{karp} and is a classical graph theory problem which has been the subject of investigation for well over a century. Indeed, a famous instance of HCP - the so-called \lq\lq Knight's tour" problem - was solved by Euler in the 1750s, and it remains an area of active research (e.g. see Eppstein \cite{eppstein}, Borkar et al \cite{borkar}, and Baniasadi et al \cite{baniasadi}).

Arguably, it is interesting to consider what might be called \lq\lq linear orbits" of famous NP-complete problems, such as directed HCP. By this, we mean the set of other NP-complete problems which may be converted to, say, directed HCP in such a way that the input size of the resultant HCP instance is a linear function of the input size of the original problem instance. We refer to such a conversion as a {\em linearly-growing} conversion. Although conversions between NP-complete problems have been extensively explored since 1971, less attention has been paid to the input sizes of the resultant instances after such conversions, and yet input sizes that grow quadratically or higher are likely to produce intractable instances.

In this chapter, we provide a linearly-growing conversion procedure that accepts any instance of SSP as input, and produces an equivalent instance of directed HCP as output. The equivalence is in the sense that a Hamiltonian in the output graph instance supplies a solution to the original SSP instance, and non-Hamiltonicity in the output instance implies infeasibility of the original SSP instance.

\section{Simplifying the SSP instance}

Consider an instance of SSP, containing the universe set $\mathcal{U}$ and the family $\mathcal{S}$ of subsets of $\mathcal{U}$. Before we begin solving the problem, we can attempt to simplify it, to obtain a smaller instance that must still have the same answer as the original. The following steps may be performed:

\begin{enumerate}\item If any $S^i \in \mathcal{S}$ contains only a single entry, then the decision of the SSP instance is NO, as this set cannot be split. In this case there is no need to solve the SSP.
\item If any element $u \in \mathcal{U}$ is not contained in any $S^i \in \mathcal{S}$, then it may be removed from $\mathcal{U}$. This is because $u$ could be placed in either partition without affecting the solution, so it is inconsequential to the problem.
\item If any $S^i \in \mathcal{S}$ is equal to $\mathcal{U}$, then it may be disregarded, as any partitioning of $\mathcal{U}$ into non-empty subsets will split $S^i$.
\item If any $S^i \in \mathcal{S}$ is a subset of some other $S^j \in \mathcal{S}$, then $S^j$ may be disregarded, as any partitioning of $\mathcal{U}$ that splits $S^i$ necessarily splits $S^j$ as well.\end{enumerate}

Once the instance has been simplified in this manner, we say it is in {\em simple form}, and we are ready to begin converting it to an instance of directed HCP.

\section{Algorithm for converting an SSP instance to an instance of directed HCP}\label{sec-construction}

For a given instance $\left<\mathcal{U}, \mathcal{S}\right>$ of SSP we shall construct an instance $\Gamma = \left<V, E\right>$ of HCP possessing the property that any Hamiltonian cycle corresponds, in a natural way, to a solution of the original instance of SSP. Additionally, in the case the constructed graph does not possess a Hamiltonian cycle, neither does the original instance of SSP have a solution.

The algorithm for constructing $\Gamma$ from $\left<\mathcal{U}, \mathcal{S}\right>$ has three main steps in which three sets of vertices and edges are constructed. Collectively, these will comprise the vertex and edge sets of $\Gamma$.

Suppose that we have an instance $\left<\mathcal{U}, \mathcal{S}\right>$ of SSP in simple form. Let $\mathcal{U} = \{1, 2, \hdots, u\}$ denote the universe set, and assume that each $S^i \in \mathcal{S}$ contains entries $s^i_j$ in ascending order. Denote by $s$ the number of subsets remaining after the simplification process, and also define $c := \sum\limits_{i = 1}^s |S^i|$ to be the total number of elements over all subsets $S^i$. Note that $c \geq u$, where the case $c = u$, trivially, has the answer YES. Then we may define an instance of HCP in the form of a graph $\Gamma$ containing vertices $V$ and directed edges $E$, such that the HCP instance is equivalent to the SSP instance, as follows.

The vertex set $V$ can be partitioned into three mutually disjoint subsets of vertices $V^U$, $V^S$ and $V^C$. That is, $V = V^U \cup V^S \cup V^C$. The subset $V^U$ will contain vertices corresponding to each element of the universe set $\mathcal{U}$. The subset $V^S$ will contain vertices corresponding to each subset $S^i \in \mathcal{S}$. The subset $V^C$ will contain two additional \lq\lq connecting" vertices, that will link the $V^U$ and $V^S$ parts to form a cycle.

Likewise, the edge set $E$ can be partitioned into three mutually disjoint subsets of edges $E^U$, $E^S$ and $E^C$. That is, $E = E^U \cup E^S \cup E^C$. The subset $E^U$ will contain edges whose endpoints lie entirely within $V^U$. Similarly, the subset $E^S$ will contain edges whose endpoints lie entirely within $V^S$. Finally, $E^C$ will contain many \lq\lq connecting" edges, which connect vertices from any of the three partitions to other vertices, possibly in different partitions.

%{\bf {\underline{The conversion algorithm}}}

\paragraph{{\underline{The conversion algorithm}}}
\paragraph{\underline{Step 1:}}

We will first consider the vertex subset $V^U$ and edge subset $E^U$. These sets can be further partitioned into $u$ subsets, one for each element in the universe set. That is, $V^U = \bigcup\limits_{i=1}^u V^{U,i}$ and $E^U = \bigcup\limits_{i=1}^u E^{U,i}$. Then, for each element $i \in \mathcal{U}$, we can describe $V^{U,i}$ and $E^{U,i}$ directly:

\begin{eqnarray*}V^{U,i} & = & \{v^{U,i}_1, v^{U,i}_2, v^{U,i}_3, v^{U,i}_4\}\\
E^{U,i} & = & \{(v^{U,i}_1,v^{U,i}_2), (v^{U,i}_1,v^{U,i}_3), (v^{U,i}_2,v^{U,i}_3), (v^{U,i}_3,v^{U,i}_2)\}\end{eqnarray*}

Note that at this stage of the construction, $v^{U,i}_4$ is an isolated vertex. Each subgraph $\left<V^{U,i}, E^{U,i}\right>$ may be visualised as in Figure \ref{fig-vU_component}. The thick bold undirected edge between vertices $v^{U,i}_2$ and $v^{U,i}_3$ represents directed edges in both directions between the two vertices.

\begin{figure}[h!]
\centering\includegraphics[scale=0.25]{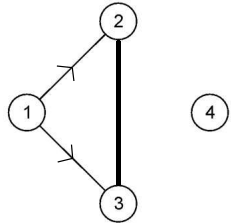}
\label{fig-vU_component}\caption{One $V^{U,i}$ component. Here, vertex $v^{U,i}_j$ is represented simply by label $j$, for the sake of neatness.}
\end{figure}

\paragraph{\underline{Step 2:}}

We will next consider the vertex subset $V^S$ and edge subset $E^S$. These sets can be further partitioned into $s$ subsets, one for each subset $S^i \in \mathcal{S}$. That is, $V^S = \bigcup\limits_{i=1}^s V^{S,i}$ and $E^S = \bigcup\limits_{i=1}^s E^{S,i}$. Then, for each subset $S^i \in \mathcal{S}$, we must first determine $|S^i|$. For neatness, when no confusion is possible we define $k = |S^i|$, taking care to remember that the value of $k$ depends on $i$. Then, the number of vertices in $V^{S,i}$ is chosen to be $5 + 6k$, each of which will be denoted by $v^{S,i}_j$. The edge set $E^{S,i}$ is the union of the following three groups of edges:

Group I:
\begin{eqnarray}(v^{S,i}_1,v^{S,i}_6), (v^{S,i}_1,v^{S,i}_{6+3k}), (v^{S,i}_{5+3k},v^{S,i}_2),\label{edg-s1}\\
(v^{S,i}_{5+6k},v^{S,i}_2), (v^{S,i}_3, v^{S,i}_4), (v^{S,i}_4, v^{S,i}_3), (v^{S,i}_4, v^{S,i}_5), (v^{S,i}_5,v^{S,i}_4),\nonumber\end{eqnarray}

Group II: for all $j = 1, \hdots, k$ (for neatness, we define $a_j = 3+3j$, $b_j = 4+3j$ and $c_j = 5+3j$):

\begin{eqnarray}(v^{S,i}_{a_j}, v^{S,i}_{b_j}), (v^{S,i}_{b_j}, v^{S,i}_{a_j}), (v^{S,i}_{b_j}, v^{S,i}_{c_j}), (v^{S,i}_{c_j},v^{S,i}_{b_j}),\\
(v^{S,i}_{a_j+3k}, v^{S,i}_{b_j+3k}), (v^{S,i}_{b_j+3k}, v^{S,i}_{a_j+3k}), (v^{S,i}_{b_j+3k}, v^{S,i}_{c_j+3k}), \label{edg-s2} (v^{S,i}_{c_j+3k},v^{S,i}_{b_j+3k}),\nonumber\end{eqnarray}

Group III: for all $j = 1, \hdots, k-1$ (retaining the definitions of $a_j$, $b_j$ and $c_j$ from above):

\begin{eqnarray}(v^{S,i}_{c_j},v^{S,i}_{c_j+1}), (v^{S,i}_{c_j},v^{S,i}_{c_j+1+3k}), (v^{S,i}_{c_j+3k}, v^{S,i}_{c_j+1}), (v^{S,i}_{c_j+3k}, v^{S,i}_{c_j+1+3k}),\label{edg-s3}\\
(v^{S,i}_{c_j},v^{S,i}_3), (v^{S,i}_3, v^{S,i}_{c_j+1}), (v^{S,i}_{c_j+3k},v^{S,i}_5), (v^{S,i}_5, v^{S,i}_{c_j+3k+1}).\nonumber\end{eqnarray}

Each subgraph $\left<V^{S,i}, E^{S,i}\right>$ has a characteristic visualisation. In Figure \ref{fig-vS_component} we display such a subgraph for the case where $k = |S^i| = 3$. The thick bold undirected edges represent directed edges in both directions between two vertices. Note that in Figure \ref{fig-vS_component}, the Group I edges are the two directed edges emanating from each of vertex $v^{S,i}_1$ and $v^{S,i}_2$, as well as the undirected edges between vertices $v^{S,i}_3$, $v^{S,i}_4$ and $v^{S,i}_5$. The Group II edges are the undirected edges on the top and bottom of the figure. The Group III edges are all of the directed edges in the interior of the figure.

\begin{figure}[h!]
\centering\includegraphics[scale=0.4]{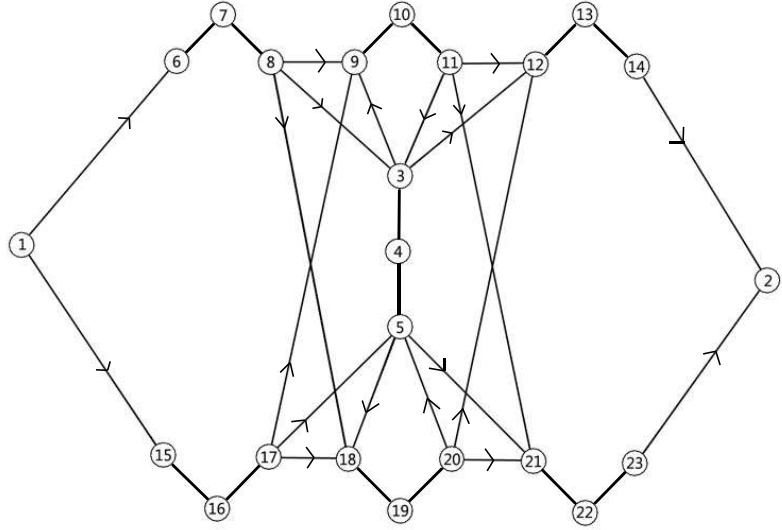}
\label{fig-vS_component}\caption{One $V^{S,i}$ component. Here, vertex $v^{S,i}_j$ is represented simply by label $j$, for the sake of neatness.}
\end{figure}

\paragraph{\underline{Step 3:}}

Finally, we consider the vertex subset $V^C$ and edge subset $E^C$. There are only two vertices in $V^C$, namely $v^C_1$ and $v^C_2$. However, there are many edges in $E^C$, and a procedure must be undertaken to identify them all. First, we include the following edges in $E^C$:

\begin{eqnarray}(v^{U,u}_4, v^C_1), (v^C_1, v^{S,1}_1), (v^{S,s}_{5+6|S^s|}, v^C_2), (v^C_2, v^{U,1}_1),\label{edg-c1}\end{eqnarray}

as well as the following edges for each $i = 1, \hdots, u-1$:

\begin{eqnarray}(v^{U,i}_4, v^{U,i+1}_1),\label{edg-c2}\end{eqnarray}

and also the following edges for each $j = 1, \hdots, s-1$:

\begin{eqnarray}(v^{S,j}_{5+6|S^j|}, v^{S,j+1}_1).\label{edg-c3}\end{eqnarray}

The edges in (\ref{edg-c1})--(\ref{edg-c3}) link the various components of the graph together. Specifically, the first group of edges links the $V^U$ component to the $V^S$ component, the second group links each $V^{U,i}$ component with the $V^{U,i+1}$ component that follows it, and the third group links each $V^{S,i}$ component with the $V^{S,i+1}$ component that follows it. At this stage of construction, the graph $\Gamma$ can be visualised as in Figure \ref{fig-partial_conversion}.

\begin{figure}[h!]
\centering\includegraphics[scale=0.15]{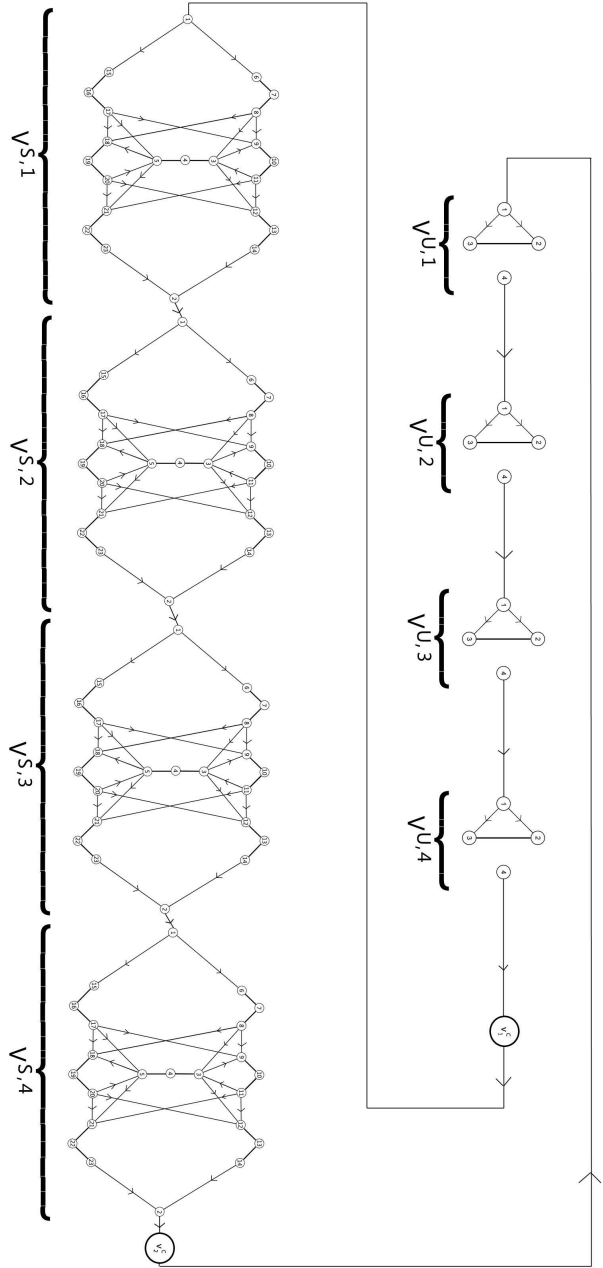}
\caption{The graph $\Gamma$ at an intermediate stage of construction. Note that in this example, $u = s = 4$ and all $|S^i| = 3$.\label{fig-partial_conversion}}
\end{figure}

However, the above edges do not comprise all of $E^C$. Additional edges need to be added, as follows. For each $i \in \mathcal{U}$, we undertake the following procedure to insert additional edges in $E^C$. First we identify all subsets in $\mathcal{S}$ which contain the element $i$, and store their indices in a set $F^i$. We also record a set $R^i$, which contains the positions of element $i$ in each subset $S^{F^i_j}$. For example, suppose that the subsets are $S^1 = (1,3,6)$, $S^2 = (2,3,4)$, $S^3 = (2,4,6)$ and $S^4 = (1,2,5)$. Then $F^1 = (1,4)$, and $R^1 = (1,1)$. Similarly, $F^2 = (2,3,4)$, and $R^2 = (1,1,2)$. For the sake of neatness, when no confusion is possible, we will define $f = |F^i|$, taking care to remember that $f$ depends on the value of $i$.

For each $i \in \mathcal{U}$, we define $d_{ij} = 3 + 3R^i_j$, and $e_{ij} = 3 + 3|S^{F^i_j}| + 3R^i_j$, and insert the following edges:

\begin{eqnarray}(v^{U,i}_2, v^{S,F^i_1}_{2+e_{i1}}), (v^{S,F^i_f}_{e_{if}}, v^{U,i}_4), (v^{U,i}_3, v^{S,F^i_1}_{2+d_{i1}}), (v^{S,F^i_f}_{d_{if}}, v^{U,i}_4),\label{edg-c4}\end{eqnarray}

into $E^C$. Finally for $i \in \mathcal{U}$, and each $j = 1, \hdots, f-1$ (retaining the definitions of $f$, $d_{ij}$ and $e_{ij}$ from above), we insert pairs of edges:

\begin{eqnarray}(v^{S,F^i_j}_{e_{ij}}, v^{S,F^i_{j+1}}_{2+e_{i,j+1}}), (v^{S,F^i_j}_{d_{ij}}, v^{S,F^i_{j+1}}_{2+d_{i,j+1}}),\label{edg-c5}\end{eqnarray}

into $E^C$. The edges in (\ref{edg-c4}) and (\ref{edg-c5}) have the effect of creating two paths, that each travel from one of the vertices in $V^{U,i}$ (either $v^{U,i}_2$ or $v^{U,i}_3$), through three vertices in each $V^{S,F^i_j}$, and finally return to $V^{U,i}$, specifically to the vertex $v^{U,i}_4$. Two such paths are illustrated in Figure \ref{fig-finished_conversion}, for $i = 2$. In the example shown in Figure \ref{fig-finished_conversion}, we assume $S^1 = (1,2,3)$, $S^2 = (1,2,4)$, $S^3 = (1,3,4)$ and $S^4 = (2,3,4)$. The completed graph would have six more paths, two for each of $i = 1$, $i = 3$ and $i = 4$, creating a connected graph. The latter have been omitted for the sake of visual clarity.

\begin{figure}[h!]
\centering\includegraphics[scale=0.15]{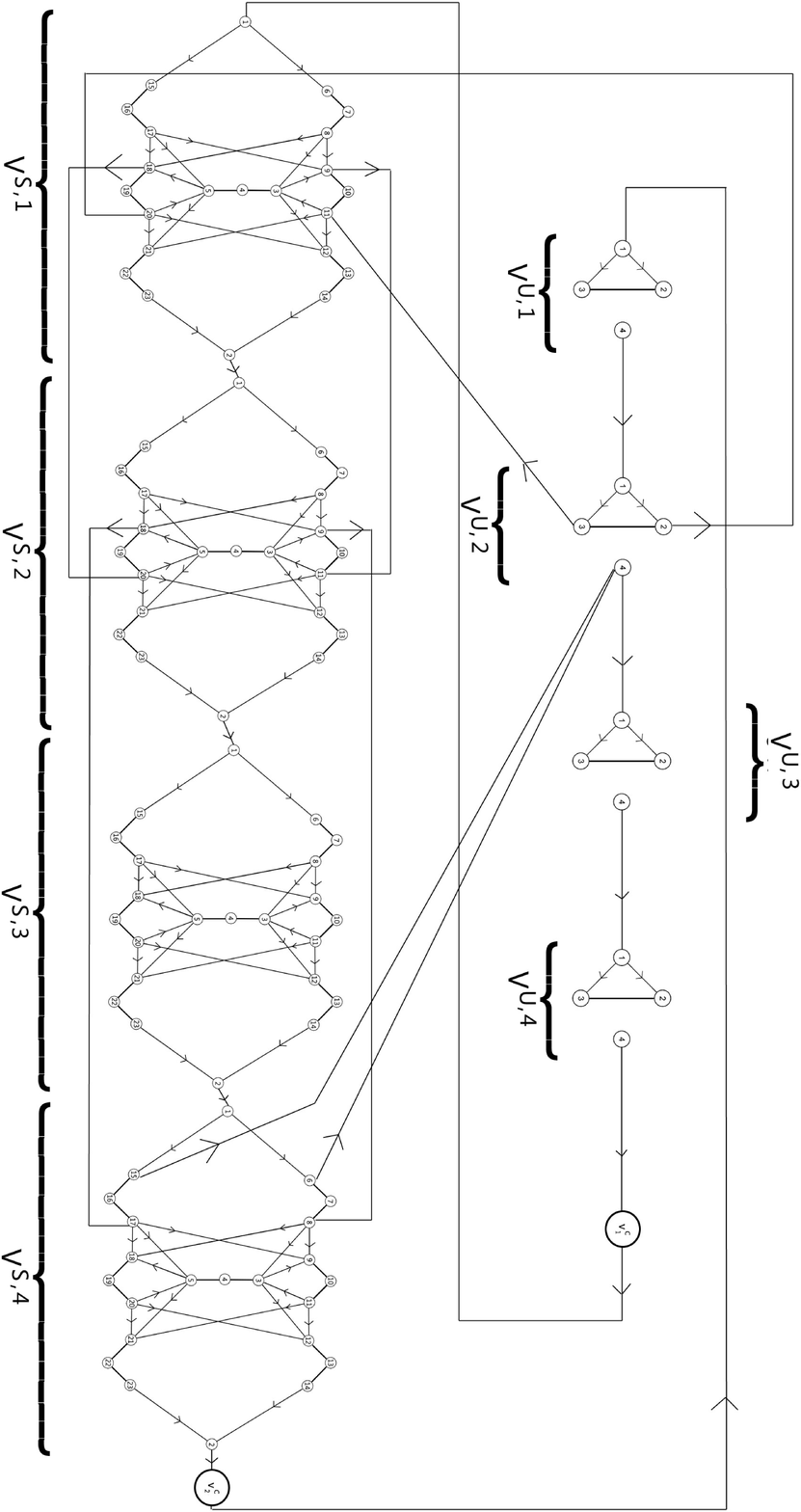}
\caption{The graph $\Gamma$ after being fully constructed. Note that, for the sake of clarity, we only show the new paths corresponding to the element 2. In this example, $S^1 = (1,2,3)$, $S^2 = (1,2,4)$, $S^3 = (1,3,4)$ and $S^4 = (2,3,4)$.\label{fig-finished_conversion}}
\end{figure}

This completes the construction of $\Gamma = \left<V, E\right>$ from $\left<\mathcal{U}, \mathcal{S}\right>$. We shall now calculate the cardinalities of $V$ and $E$.

%{\bf {\underline{Dimensionality of the constructed graph}}}
\paragraph{Dimensionality of the constructed graph}

The final graph $\Gamma$will contain 4 vertices for each $i \in \mathcal{U}$, 5 vertices for each $S^i \in \mathcal{S}$, 6 vertices for each entry $s^i_j$, and two additional vertices $v^C_1$ and $v^\mathcal{C}_2$. Therefore the total number of vertices in the graph is $4u + 5s + 6c + 2$.

Counting the number of edges takes a bit more work. There are 4 edges (counting undirected edges as two, directed, edges) in each $E^{U,i}$. So $E^U$ contributes $4u$ edges.

Then for each $E^{S,i}$, there are 8 edges that will always be present (two from $v^{S,i}_1$, two going to $v^{S,i}_2$, and four between $v^{S,i}_3$, $v^{S,i}_4$ and $v^{S,i}_5$. Then for each element $s^i_j$ there are 16 additional edges, except for the final element $s^i_{|S^i|}$ for which there are eight edges. So $E^S$ contributes $8s + 16(c-s) + 8s = 16c$ edges.

Finally, for $E^C$, there are $u$ connecting edges for the $V^{U,i}$ components, and $s$ connecting edges for the $V^{S,i}$ components. There are two more connecting edges emerging from $v^C_1$ and $v^C_2$. Finally, for each element $i \in \mathcal{U}$, there are $2|F_i|+2$ connecting edges forming the two paths, where $|F_i|$ is the number of subsets containing element $i$. So $E^C$ contributes $u + s + 2 + 2(c + u) = 3u + s + 2c + 2$.

Therefore, the total number of directed edges in the graph is $7u + s + 18c + 2$. It should be noted that both the cardinality of the vertex set $V$ and edge set $E$ are linear functions of the cardinalities of the input sets for the original problem. For this reason, we refer to the conversion described above as a {\em linearly-growing conversion} from SSP to directed HCP.

\section{The intended interpretation of the converted graph}

Once the conversion is complete, the graph $\Gamma$ contains many components $V^{U,i}$ and $V^{S,i}$ corresponding to elements in $\mathcal{U}$ and subsets $S^i \in \mathcal{S}$, respectively. We now consider in some detail the intended interpretation of any Hamiltonian cycle traversing those components.

For each element $\hat{u} \in \mathcal{U}$, there are four vertices in $V^{U,\hat{u}}$. As will be proved later, any Hamiltonian cycle in $\Gamma$ visits $v^{U,\hat{u}}_2$ and $v^{U,\hat{u}}_3$ in succession (in either order). If $v^{U,\hat{u}}_2$ is visited last, it corresponds to placing the element $\hat{u}$ in the first partition. If vertex $v^{U,\hat{u}}_3$ is visited last, it corresponds to placing the element $\hat{u}$ in the second partition.

For each $S^i \in \mathcal{S}$, and $k := |S^i|$, by construction, there are $5 + 6k$ vertices in $V^{S,i}$. Now, consider a particular element $s^i_j$, that is, the $j$-th entry of $S^i$. Naturally, $s^i_j \in \mathcal{U}$. Then, corresponding to this element being chosen in the first partition, there are vertices $v^{S,i}_{3+3j}$, $v^{S,i}_{4+3j}$ and $v^{S,i}_{5+3j}$. Similarly, corresponding to this element being chosen in the second partition, there are vertices $v^{S,i}_{3+3k+3j}$, $v^{S,i}_{4+3k+3j}$ and $v^{S,i}_{5+3k+3j}$.

Now, suppose there is an element $\hat{u}$ that we have chosen to be in $\mathcal{U}_1$. That is, the cycle visits $v^{U,\hat{u}}_2$ after $v^{U,\hat{u}}_3$. Consider all subsets $S^i$ that contain $\hat{u}$, that is $s^i_{j_i} = \hat{u}$ for some $j_i$. Then, immediately after visiting $v^{U,\hat{u}}_2$, the cycle will begin to travel to each such component $V^{S,i}$, and in each of them it will traverse the three vertices corresponding to selecting $s^i{j_i}$ in $\mathcal{U}_2$ (not $\mathcal{U}_1$). In this way, only those vertices corresponding to this element being chosen in the correct partition will remain to be traversed later. Once all such vertices in all relevant $V^{S,i}$ components have been visited, the cycle returns to the $V^{U,\hat{u}}$ component in order to proceed to make a choice of partition for the next element in $\mathcal{U}$.

The process until this point will be called \lq\lq stage 1". Once a choice of partition has been made for all elements in $\mathcal{U}$, the cycle travels through $v^C_1$, and on to $v^{S,1}_1$, whereby \lq\lq stage 2" begins.

The intention is that, upon reaching vertex $v^{S,i}_1$ for each $i$, the cycle will recognise which partition $s^i_1$ is in, because the vertices corresponding to the alternative partition will have already been visited during stage 1. The cycle will then proceed through the three vertices corresponding to the correct choice of partition. Then, the cycle will again recognise which partition $s^i_2$ is in, traverse the three vertices corresponding to that choice, and so on. Once the vertices corresponding to all elements, in their partitions, of $S^i$ are traversed, the cycle will reach vertex $v^{S,i}_2$ and proceed to the next component $V^{S,i+1}$. Specifically, it will travel from $v^{S,i}_2$ to $v^{S,i+1}_1$.

During this process, however, the option to visit one of $v^{S,i}_3$ or $v^{S,i}_5$ will present itself each time the the three vertices for an element and partition are traversed. Specifically, after traversing through vertices corresponding to an element being in $\mathcal{U}_1$, it will be possible to visit $v^{S,i}_3$. Alternatively, after traversing through vertices corresponding to an element being in $\mathcal{U}_1$, it will be possible to visit $v^{S,i}_5$. If the cycle chooses to visit one of these two vertices, it will continue through $v^{S,i}_4$ and out to the other of the two vertices. At this point the cycle will continue to traverse the three vertices corresponding to the next element in the set, but it will be forced to choose vertices corresponding to the opposite partition to that of the previous element. For example, suppose the cycle traverses the three vertices corresponding to $s^i_j$ being chosen in the first partition, and then visits $v^{S,i}_3$. It will subsequently visit $v^{S,i}_4$ and $v^{S,i}_5$, and then proceed to visit the three vertices corresponding to $s^i_{j+1}$ being chosen in the second partition.

The above paragraph describes the essence of why the conversion works. Vertices $v^{S,i}_3$, $v^{S,i}_4$ and $v^{S,i}_5$ must be traversed at some point during stage 2, but after they are traversed, the element that is subsequently considered must be in a different partition to the previous element. This will be possible in all $V^{S,i}$ if and only if the partition choices made in stage 1 split all of the subsets. Therefore, as will be shown rigorously below), Hamiltonian cycles will only exist if it is possible to solve the original instance of SSP.

The formal proof of the validity of the above description is presented next.

\section{Proof of conversion}

In this section we will prove that, for an instance of SSP, the conversion given in Section \ref{sec-construction} produces a graph that possesses the desired properties; that is, the graph is Hamiltonian if and only if the original instance of SSP has solutions, that all Hamiltonian cycles correspond to solutions of the instance of SSP, and that all such solutions can be recovered from the Hamiltonian cycles in polynomial time. Throughout the proof we will take advantage of the structure produced the graph construction. We now outline the primary such advantages, before continuing with the main result.

While attempting to construct a Hamiltonian cycle in the converted graph, we will regularly take advantage of {\em forced} choices. These fall into three categories. If a vertex is arrived at, and only one outgoing edge $e$ exists that leads to an as-of-yet unvisited vertex, the cycle will be forced to proceed along the edge $e$. In such a case, we say there is only {\em one remaining outgoing edge}. Alternatively, if a vertex is arrived at, and there is an outgoing edge $e$ that leads to another vertex, for which no other incoming edges from as-of-yet unvisited vertices exist, the cycle will be forced to proceed along the edge $e$. In such a case, we say there is only {\em one remaining incoming edge}. Finally, if a vertex is arrived at, and there is an outgoing edge $e$ that leads to an as-of-yet unvisited vertex $v$ which has degree 2, the cycle will be forced to proceed along edge $e$. By degree 2, we mean that $v$ has exactly two outgoing edges that lead to vertices $v_2$ and $v_3$, and exactly two incoming edges that also come from vertices $v_2$ and $v_3$. Note that this is a slightly non-standard, but convenient, use of the word \lq\lq degree" in a directed graph.

Suppose that, during in the process of constructing a Hamiltonian cycle, we arrive at a vertex $v^{U,i}_1$. There are two possible choices of which vertex to visit next: $v^{U,i}_2$ and $v^{U,i}_3$. Whichever choice is made, we are forced to visit the other immediately afterwards, as there will only be one remaining incoming edge. At this point, regardless of which vertex we have arrived at, there will only be one remaining outgoing edge, which will lead to a vertex in one of the $V^{S,j}$ components, specifically a vertex $v^{S,j}_{5+3k}$ for some positive integers $j$ and $k$, where $5 + 3k \leq |V^{S,j}|$. We will refer to this situation as a {\em type 1 forced path}.

Also, suppose that, during the process of constructing a Hamiltonian cycle, we travel from a vertex in any component other than $V^{S,j}$, to a vertex $v^{S,j}_{5+3k}$ for some positive integers $j$ and $k$. Note that this vertex is adjacent to a degree 2 vertex $v^{S,j}_{4+3k}$. Since the vertex $v^{S,j}_{4+3k}$ was not visited immediately before $v^{S,j}_{5+3k}$, we are forced to visit it immediately, and then proceed along the one remaining outgoing edge to the vertex $v^{S,j}_{3+3k}$. At this point, there will also be only one remaining outgoing edge, which will either lead to a vertex of the form $v^{S,l}_{5+3m}$ for some positive integers $l$ and $m$, where $l > j$ and $5 + 3m \leq |V^{S,l}|$, or it will lead to the vertex $v^{U,i}_4$. Note that in the former case, we arrive in the same type of situation that we started with at the beginning of this paragraph. In the latter case, however, we arrive at $v^{U,i}_4$ and are then forced to proceed either to $v^{U,i+1}_1$ (if $i < u$), or to $v^C_1$ (if $i = u$). We will refer to this situation as a {\em type 2 forced path}.

We now pose the main result of this chapter.

\begin{proposition}\label{prop-conversion_works}Consider an instance of SSP with universe set $\mathcal{U}$ and a family of subsets $\mathcal{S}$, and the graph $\Gamma = \left<V,E\right>$ constructed as in Section \ref{sec-construction}. Then the following three properties hold:

\begin{enumerate}
\item[(i)] If no partition of $\mathcal{U}$ exists that splits all $S^i \in \mathcal{S}$, then $\Gamma$ is non-Hamiltonian.
\item[(ii)] If a partition of $\mathcal{U}$ exists that does split all $S^i \in \mathcal{S}$, a corresponding Hamiltonian cycle exists in $\Gamma$.
\item[(iii)] From any Hamiltonian cycle in $\Gamma$ we can, in polynomial time, identify a corresponding partition of $\mathcal{U}$ that constitutes a solution of the original instance of SSP.
\end{enumerate}\end{proposition}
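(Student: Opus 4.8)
The plan is to determine, once and for all, the only ways a cycle can run through $\Gamma$ consistently with being Hamiltonian, using the forced-move bookkeeping already set up (the type~1 and type~2 forced paths), and then to read the three claims off that structural description. Since $v^C_1$ and $v^C_2$ each have in-degree and out-degree exactly $1$, any Hamiltonian cycle $H$ must contain $v^{U,u}_4 \to v^C_1 \to v^{S,1}_1$ together with the symmetric edges at $v^C_2$. Starting from $v^C_2$ and chaining forced moves, $H$ leaves $v^{U,1}_1$ along a type~1 forced path (visiting $v^{U,1}_2$ and $v^{U,1}_3$ in one of the two orders) into some $V^{S,j}$ at a vertex $v^{S,j}_{5+3k}$; a maximal run of type~2 forced paths then passes through components $V^{S,\cdot}$ of strictly increasing index and is forced back to $v^{U,1}_4$; a forced edge leads to $v^{U,2}_1$, and the argument repeats. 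By induction, every Hamiltonian cycle decomposes into a \emph{stage~1} that visits $v^C_2$ and then $V^{U,1},\dots,V^{U,u}$ in order (entering $V^{U,i}$ at $v^{U,i}_1$, leaving at $v^{U,i}_4$), followed by the forced edge to $v^C_1$, followed by a \emph{stage~2} that visits $v^C_1$ and then $V^{S,1},\dots,V^{S,s}$ in order, and finally closes at $v^C_2$. The bookkeeping to record is that within $V^{U,i}$ the visiting order of $v^{U,i}_2,v^{U,i}_3$ declares $i\in\mathcal{U}_1$ or $i\in\mathcal{U}_2$, and that stage~1 then follows exactly the correspondingly determined forced path among the edges (\ref{edg-c4})--(\ref{edg-c5}); its net effect is that in each $V^{S,j}$ precisely the triple of vertices for the \emph{opposite} of the chosen partition of each element of $S^j$ is pre-visited, leaving exactly one ``correct'' triple surviving at each position of each $S^j$.

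Next I would analyse stage~2 inside a fixed $V^{S,j}$, treating the partition $(\mathcal{U}_1,\mathcal{U}_2)$ from stage~1 as given data. Entering at $v^{S,j}_1$, the cycle is forced toward the $a$-vertex of the surviving triple at the first position of $S^j$; the degree-$2$ middle vertex of that triple then forces the cycle through all three of its vertices; and from its $c$-vertex the Group~III and Group~I edges carry the cycle either directly into the surviving triple of the next position, or through a detour via $v^{S,j}_3,v^{S,j}_4,v^{S,j}_5$. The crucial local fact, which I would verify by inspecting the out-neighbourhoods in Groups~I--III, is that $v^{S,j}_3,v^{S,j}_4,v^{S,j}_5$ are untouched by stage~1 and can be inserted into the stage~2 path only between two consecutive positions of $S^j$ whose surviving triples lie in \emph{opposite} partitions (a detour entered from a first-partition $c$-vertex is forced out into a second-partition $a$-vertex, and symmetrically), and furthermore that exactly one such detour can occur, since it consumes $v^{S,j}_3,v^{S,j}_4,v^{S,j}_5$. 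Consequently: if $S^j$ is split there is an adjacent opposite-partition pair, the detour can be placed there, and one checks that this uniquely completes a path covering all of $V^{S,j}$ and exiting it; whereas if $S^j$ is not split then every adjacent pair is same-partition, no detour is admissible, $v^{S,j}_3,v^{S,j}_4,v^{S,j}_5$ cannot be visited, and $\Gamma$ has no Hamiltonian cycle.

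Finally I would assemble the three claims. Claim~(i) is the contrapositive of the last sentence above, applied to any $S^j$ that fails to be split. For claim~(iii), given a Hamiltonian cycle $H$ one runs the stage decomposition (in time linear in $|V|+|E|$) and outputs the partition defined by the visiting order of $v^{U,i}_2,v^{U,i}_3$ over all $i$; by the stage~2 analysis applied to each $V^{S,j}$ this partition splits every $S^j$, hence is a solution of the SSP instance. For claim~(ii), given a solution $(\mathcal{U}_1,\mathcal{U}_2)$ one constructs $H$ by concatenating the stage~1 pieces forced by those membership choices with, for each $V^{S,j}$, the stage~2 traversal above, placing its detour at any position where consecutive elements of $S^j$ differ in partition (one exists because $S^j$ is split), and then verifies that the concatenation is a single spanning cycle. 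The hard part will be the honest bookkeeping behind the forced-move claims --- in particular, showing that stage~2 has no freedom beyond the placement of the detour and that every admissible placement extends to a full cover of $V^{S,j}$ --- which amounts to a finite but delicate enumeration of local neighbourhoods in Groups~I--III and (\ref{edg-c1})--(\ref{edg-c5}); the degenerate cases (the trivial instance $c=u$, and subsets of size $2$, for which Group~III is empty) are checked separately and cause no real difficulty.
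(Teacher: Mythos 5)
Your proposal follows essentially the same route as the paper's proof: exploiting the type~1 and type~2 forced paths to decompose any Hamiltonian cycle into a stage~1 pass through the $V^{U,i}$ components (where the order of $v^{U,i}_2,v^{U,i}_3$ encodes the partition and pre-visits the opposite triples in the $V^{S,j}$'s) and a stage~2 pass through the $V^{S,j}$'s, with the key observation that the detour through $v^{S,j}_3,v^{S,j}_4,v^{S,j}_5$ is possible precisely between consecutive elements of $S^j$ placed in opposite parts, from which (i)--(iii) follow exactly as in the paper. Your treatment is, if anything, slightly more explicit about the forced global ordering (via the degree-one vertices $v^C_1,v^C_2$) and the uniqueness of the detour, but it is the same argument.
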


%{\bf Constructing the Hamiltonian cycle}
\begin{proof}

\paragraph{\underline{Stage 1:}}

Suppose now that we attempt to construct a Hamiltonian cycle in $\Gamma$. Since we may begin at any vertex, we choose to begin at the vertex $v^{U,1}_1$. As described above, we undergo a type 1 forced path, and eventually depart from either $v^{U,1}_2$ or $v^{U,1}_3$ and arrive at the vertex $v^{S,i}_{5+3j}$ for some $i$ and $j$. Then, since we have arrived from a component other than $V^{S,i}$, we will undergo a type 2 forced path. Then, we may (or may not) arrive at another vertex for which a type 2 forced path is applicable. Inductively, the process continues until we do not arrive at such a vertex\footnote{In fact, due to the nature of the construction, if the element appears in $q$ subsets, then exactly $q$ type 2 forced paths must occur here.}. Throughout the process, we visit all of the vertices that correspond to placing element 1 into a particular member of the partition. The construction is such that visiting $v^{U,1}_2$ after $v^{U,1}_3$ forces us to visit the vertices corresponding to the element 1 being in $\mathcal{U}_2$. Similarly, visiting $v^{U,1}_3$ after $v^{U,1}_3$ forces us to visit the vertices corresponding to the element 1 being in $\mathcal{U}_1$. Once all of these vertices are visited, we travel to the vertex $v^{U,1}_4$ and proceed to the vertex $v^{U,2}_1$. Note that at this point, all vertices in the $V^{U,1}$ component have been visited.

The above process is then repeated for all components $V^{U,i}$. The only choice that is made in each component is whether to traverse through $v^{U,i}_1 \rightarrow v^{U,i}_2 \rightarrow v^{U,i}_3$, or to traverse through $v^{U,i}_1 \rightarrow v^{U,i}_3 \rightarrow v^{U,i}_2$. After this choice has been made, the path that is taken - through all vertices corresponding to the element $i$ in the opposite member of the partition, in all components corresponding to the subsets in which element $i$ appears - is forced until the next component $v^{U,i+1}$ is reached. Eventually, after the entirety of $V^U$ has been traversed, vertex $v^C_1$ is reached, and we are forced to proceed to the vertex $v^{S,1}_1$.

\paragraph{\underline{Stage 2:}}

There are two outgoing edges from $v^{S,1}_1$, leading to vertices $v^{S,1}_6$ and $v^{S,1}_{6+3k}$ respectively, where $k := |S^1|$. However, exactly one of these must have been visited already in stage 1. If element 1 was placed in $\mathcal{U}_1$, then vertex $v^{S,1}_{6+3k}$ will have already been visited, or similarly, if element 1 was placed in $\mathcal{U}_2$, then vertex $v^{S,1}_6$ will have already been visited. So the choice at this stage is forced. Then, both vertices $v^{S,1}_6$ and $v^{S,1}_{6+3k}$ are adjacent to degree 2 vertices, so the next two steps are forced as well. This means we visit all three vertices corresponding to the element $s^1_1$ being placed in the member of the partition that was chosen during stage 1.

At this point, there are two choices. We may either continue onto the vertices corresponding to the element $s^1_2$, or we may visit one of $v^{S,1}_3$ or $v^{S,1}_5$ (depending on whether $s^1_1$ was placed in $\mathcal{U}_1$ or $\mathcal{U}_2$, respectively). If we make the former choice, we repeat the above process for the element $s^1_2$, and end up again having to choose whether to visit the vertices corresponding to the element $s^1_3$, or visit one of $v^{S,1}_3$ or $v^{S,1}_5$. Suppose that we make the former choice for the first $j-1$ elements of $S^1$, and then choose the latter for the $j$-th element. Without loss of generality, suppose the element $s^1_j$ was chosen, during stage 1, to be in $\mathcal{U}_1$. So, after traversing the vertices corresponding to $s^1_j$, we then choose to visit $v^{S,1}_3$. Then there is an adjacent degree 2 vertex, $v^{S,1}_4$, so we are forced to travel there next, and then to $v^{S,1}_5$.

At this point, our choice is forced as the first of the vertices corresponding to choosing $s^1_{j+1}$ in $\mathcal{U}_2$ has at most one remaining incoming edge, from $v^{S,1}_5$. Therefore this choice must be made, if possible. If it is not possible (because element $s^1_2$ was chosen in $\mathcal{U}_1$ during stage 1), then the Hamiltonian cycle cannot be completed at this point. In such a case, choosing to visit vertex $v^{S,1}_3$ after traversing the vertices corresponding to the first $j$ elements was an incorrect choice. It is then clear that this choice may only be made if $s^1_{j+1}$ was chosen in $\mathcal{U}_2$ during stage 1.

An equivalent argument to that in the above paragraph can be made if element $s^1_j$ was chosen to be in $\mathcal{U}_2$. Then we can see that we may only choose to visit $v^{S,1}_3$ (or $v^{S,1}_5$) when elements $s^1_j$ and $s^1_{j+1}$ are in opposite members of the partition.

After we have visited vertices $v^{S,1}_3$, $v^{S,1}_4$ and $v^{S,1}_5$ once, they cannot be visited again, and so the remaining path through $V^{S,1}$ is forced until we finally reach $v^{S,1}_2$ and are forced to continue to $v^{S,2}_1$. The same process then continues for all components $V^{S,i}$. Finally, the vertex $v^C_2$ is visited, and we travel back to the vertex $v^{U,1}_1$ to complete the Hamiltonian cycle.

The only way in which the above process might fail to produce a Hamiltonian cycle is if there is a component $V^{S,i}$ for which we are unable to find a $j$ such that $s^i_j$ and $s^i_{j+1}$ are in opposite members of the partition. In this case, following the above argument, vertices $v^{S,i}_3$, $v^{S,i}_4$ and $v^{S,i}_5$ cannot possibly be visited in a Hamiltonian cycle. This situation arises only when all entries in $S^i$ are contained in a single member of the partition. In such a situation, the partitioning choices in stage 1 do not form a solution to the original set splitting problem. So making partition choices in stage 1 that do not solve the instance of SSP will always make it impossible to complete stage 2. Clearly then, if there is no solution to the set splitting problem, it will be impossible to find any Hamiltonian cycle. Therefore part (i) holds.

If there are solutions to the original set splitting problem, then for any such solution we can make the corresponding choices in stage 1. Then, in stage 2 there will be an opportunity in each $V^{S,i}$ component to visit the vertices $v^{S,i}_3$, $v^{S,i}_4$ and $v^{S,i}_5$, and continue afterwards. Therefore, for any such solution, a Hamiltonian cycle exists, and hence part (ii) holds.

Finally, identifying the solution to the original instance of SSP is as easy as looking at $v^{U,i}_2$ and $v^{U,i}_3$ for each $i \in \mathcal{U}$ to see which vertex was visited last on the Hamiltonian cycle. If vertex $v^{U,i}_2$ was visited last, element $i$ should be placed in $\mathcal{U}_1$. If vertex $v^{U,i}_3$ was visited last, element $i$ should be placed in $\mathcal{U}_2$. This process can obviously be performed in polynomial time, and therefore part (iii) holds.
\end{proof}

Proposition \ref{prop-conversion_works} ensures that the conversion given in Section \ref{sec-construction} produces a graph which is Hamiltonian if and only if the original set splitting problem has solutions, and each Hamiltonian cycle specifies one such solution. Since the number of vertices and edges in the resultant graph are linear functions of the original problem variables, this process constitutes a linearly-growing NP-complete conversion.

\section{Examples}

We now conclude with two small examples of SSP instances and their corresponding directed HCP instances. In the first example we provide a Hamiltonian cycle in the graph, and hence deduce a solution to the original SSP instance. In the second example, the corresponding graph is non-Hamiltonian, and we deduce that the original SSP instance has no solutions and hence the decision of the instance is NO.

\begin{example}\label{ex-ssp}Consider the SSP instance with $\mathcal{U} = \{1, 2, 3, 4\}$ and a family of subsets $\mathcal{S} = \{S^1,S^2\}$ where $S^1 = \{1, 2, 3\}$ and $S^2 = \{2, 4\}$.

Following the construction given in Section \ref{sec-construction}, we obtain an instance $\Gamma$ of directed HCP, which is displayed in Figure \ref{fig-example}. A Hamiltonian cycle in $\Gamma$ is also displayed in Figure \ref{fig-example}, with the edges in the Hamiltonian cycle designated by dashed or dotted edges. The dashed edges correspond to edges which are chosen in stage 1, while the dotted edges correspond to edges which are chosen in stage 2.

\begin{figure}[h!]
\centering\includegraphics[scale=0.375]{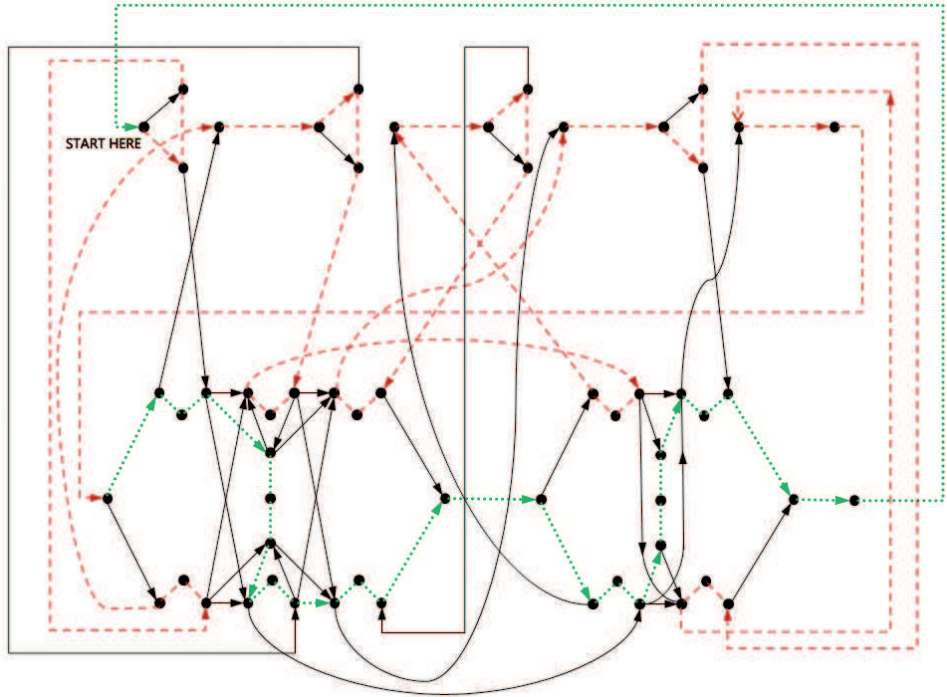}
\caption{The converted graph $\Gamma$ arising from the SSP instance in Example \ref{ex-ssp}. The dashed and dotted edges correspond to the Hamiltonian cycle, with the dashed edges being chosen in stage 1, and the dotted edges being chosen in stage 2.\label{fig-example}}
\end{figure}

Consider the Hamiltonian cycle indicated in Figure \ref{fig-example}, and traverse it from the starting vertex $v^{U,1}_1$ as marked on the figure. Then, to determine the solution of the instance of SSP, we simply need to look at which of vertices $v^{U,i}_2$ and $v^{U,i}_3$ were visited last, for each $i$, on the Hamiltonian cycle. In Figure \ref{fig-example} vertex $v^{U,i}_2$ is drawn above vertex $v^{U,i}_3$ in each case. It can be easily checked that vertices $v^{U,1}_2$, $v^{U,2}_3$, $v^{U,3}_3$ and $v^{U,4}_2$ are the last visited vertices in each case. This corresponds to a solution of the instance of SSP where elements $1$ and $4$ are placed in $\mathcal{U}_1$, and elements $2$ and $3$ are placed in $\mathcal{U}_2$. Clearly, this choice provides a splitting of $S^1$ and $S^2$.

%We now check that an incorrect choice of partitioning will make it impossible to complete a Hamiltonian cycle. Suppose we assign elements $1$ and $3$ to $\mathcal{U}_1$, and elements $2$ and $4$ to $\mathcal{U}_2$. Note that this is not a solution of the original instance of SSP, as the subset $S^2$ is not split. In Figure \ref{fig-example2} we highlight the edges chosen in stage 1 and stage 2. However, the three middle vertices in the $V^{S,2}$ component are unable to be visited, and hence this choice of partitioning can not lead to a Hamiltonian cycle.

We now check that an incorrect choice of partitioning will make it impossible to complete a Hamiltonian cycle. Suppose we assign elements $1$ and $3$ to $\mathcal{U}_1$, and elements $2$ and $4$ to $\mathcal{U}_2$. Note that this is not a solution of the original instance of SSP, as the subset $S^2$ is not split. In Figure \ref{fig-example2} we use dashed edges to denote the edges visited in stage 1 (corresponding to our incorrect choice of partitioning), and dotted edges to denote the edges subsequently visited in stage 2. However, the three middle vertices in the $V^{S,2}$ component are unable to be visited, and hence this choice of partitioning can not lead to a Hamiltonian cycle.

\begin{figure}[h!]
\centering\includegraphics[scale=0.375]{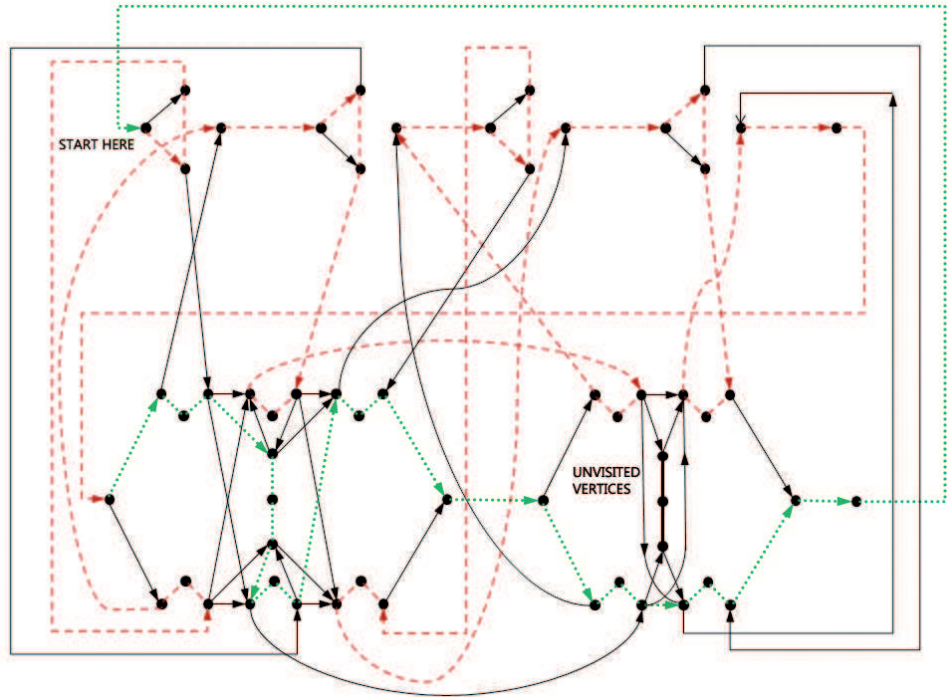}
\caption{The converted graph $\Gamma$ as in Figure \ref{fig-example}, but with a choice of edges in stage 1 that do not correspond to a solution. The three middle vertices in the bottom-right component cannot be visited during stage 2.\label{fig-example2}}
\end{figure}
\end{example}

Finally, we consider an SSP instance with no solutions, and the corresponding instance of directed HCP.

\begin{example}\label{ex-ssp2}Consider the SSP instance with $\mathcal{U} = \{1, 2, 3\}$ and a family of subsets $\mathcal{S} = \{S^1, S^2, S^3\}$ where $S^1 = \{1, 2\}$, $S^2 = \{1, 3\}$ and $S^3 = \{2, 3\}$. It is clear that this instance has no solution since, without loss of generality, if element is placed in the first partition, elements 2 and 3 must be placed in the second partition (to split the first two subsets), and then the third subset is not split.

Following the construction given in Section \ref{sec-construction}, we obtain an instance $\Gamma_2$ of directed HCP, which is displayed in Figure \ref{fig-example3}. Although it is not obvious at first glance, the HCP instance is, indeed, non-Hamiltonian. We have confirmed its non-Hamiltonicity using the Concorde TSP Solver \cite{concorde}.

Since $\Gamma$ is non-Hamiltonian, by Proposition \ref{prop-conversion_works}(i) there is no partition $(\mathcal{U}_1, \mathcal{U}_2)$ which splits all $S^i \in \mathcal{S}$. Suppose we naively tried to create such a splitting by assigning, for example, element 1 to the first partition, and elements 2 and 3 to the second partition. This would correspond to attempting to find a Hamiltonian cycle in $\Gamma_2$ containing the six highlighted edges in Figure 7. Then it is straightforward to verify that it is impossible to complete a Hamiltonian cycle. This is a similar feature to that exhibited in Figure \ref{fig-example2}, with the important exception that in the latter there were alternative, correct, choices of partitioning $\mathcal{U}$ which do permit Hamiltonian cycles, such as the one illustrated in Figure \ref{fig-example}.

\begin{figure}[h!]
\centering\includegraphics[scale=0.375]{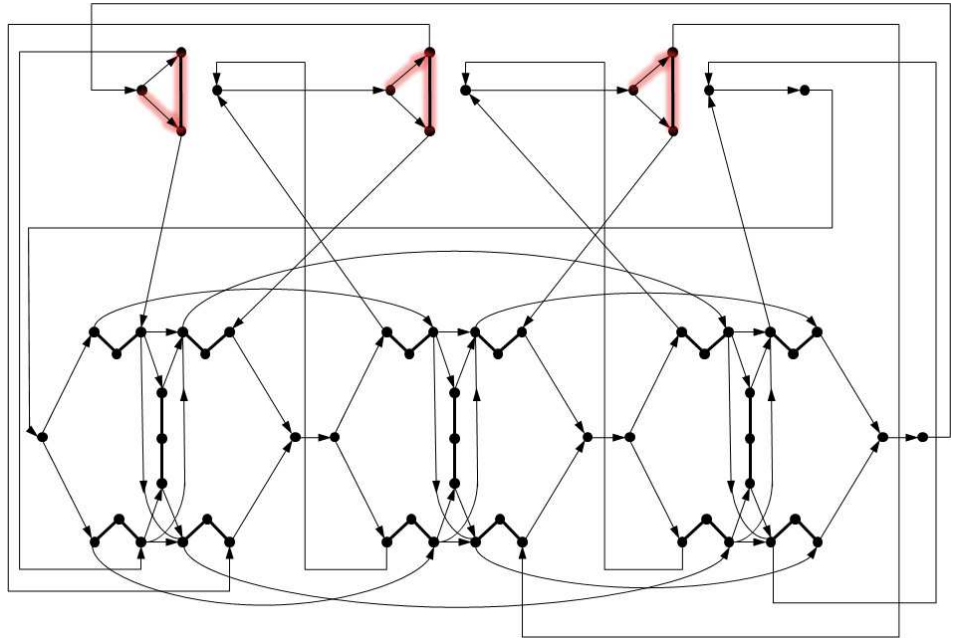}
\caption{The converted graph $\Gamma_2$ arising from the SSP instance in Example \ref{ex-ssp2}. Since the SSP instance has decision NO, the graph $\Gamma_2$ is non-Hamiltonian.\label{fig-example3}}
\end{figure}
\end{example}

\begin{acknowledgement}
The authors gratefully acknowledge useful conversations with Dr. Richard Taylor that helped in the development of this chapter.
\end{acknowledgement}

\end{document}